\newtheorem{thm}{Theorem}
\newtheorem{cor}{Corollary}
\newtheorem{lem}{Lemma}
\newtheorem{eg}{Example}
\newcommand{\SL}{\mathbb{SL}} % symmetric space
\newcommand{\SO}{\mathbb{SO}} % symmetric space
\newcommand{\X}{\mathbf{X}} % symmetric space
\newcommand{\G}{\mathbf{G}} % semi simple Lie Group
\newcommand{\B}{\mathbf{B}} % K/M
\newcommand{\R}{\mathbb{R}} % real numbers
\newcommand{\C}{\mathbb{C}} % complex numbers
\newcommand{\HH}{\mathbb{H}_2} % complex numbers
\newcommand{\technical}[1]{#1} % style-conforming replacement for italics
\begin{document}
\begin{frontmatter}
\title{Lower Bounds for Kernel Density Estimation on Symmetric Spaces}
\runtitle{Lower Bounds for KDE}
\begin{aug}
\author{Dena Marie Asta}
\address{Department of Statistics\\ Ohio State University\\ Columbus, OH\\ 43210 USA\\
\href{Url}{dasta@stat.osu.edu}}
\runauthor{D. M. Asta}
\end{aug}

\begin{abstract}
  We prove that kernel density estimation on symmetric spaces of non-compact type, whose $L_2$-risk was bounded above in previous work \cite{Asta-gKDE2}, in fact achieves a minimax rate of convergence.
  With this result, the story for kernel density estimation on all symmetric spaces is completed.  
  The idea in adapting the proof for Euclidean space is to suitably abstract vector space operations on Euclidean space to both actions of symmetric groups and reparametrizations of Helgason-Fourier transforms and to use the fact that the exponential map for symmetric spaces of non-compact type defines a diffeomorphism.     
%% Text of abstract
\end{abstract}

\end{frontmatter}

\begin{keyword}
Harmonic analysis \sep
Helgason-Fourier Transform \sep
Kernel Density Estimator \sep
Non-Euclidean Geometry \sep
Non-parametric
%% keywords here, in the form: keyword \sep keyword
%% PACS codes here, in the form: \PACS code \sep code
%\MSC 62G07 \sep 43-04
%% MSC codes here, in the form: \MSC code \sep code
%% or \MSC[2008] code \sep code (2000 is the default)
\end{keyword}

%\linenumbers

%% main text

\section{Introduction}
Data, while often expressed as collections of real numbers, are often more naturally regarded as points in symmetric spaces, spaces that intuitively look the same from the vantage of any given point and include Euclidean space, spheres of various dimensions, the non-compact hyperboloid of constant negative curvature $\mathbb{H}_2$, and the space of symmetric $(3\times 3)$ positive definite matrices (eg. \citep{Rahman-et-al, Krioukov-et-al-hyperbolic-geometry, Asta-et-al}).
The literature offers some variants of kernel density estimation on smooth manifolds, including symmetric spaces.  
All symmetric spaces $X$ can be decomposed into symmetric spaces of \technical{Euclidean}, \technical{compact}, and \technical{non-compact} type in such a way that a KDE on $X$ can be constructed from KDEs on the three types.   
Symmetric spaces of the first two types admit KDEs with minimax convergence rates for $L_2$-risk given in terms of Sobolev constraints based on work by others (e.g. \cite{Pelletier}). 
A previous paper \cite{Asta-gKDE2} gives an upper bound on the convergence rate for $L_2$-risk of a KDE on symmetric spaces of the last type in terms of an order $\alpha$ Sobolev constraint (for any real $\alpha>0$).
This note completes the picture by showing that this upper bound is also a lower bound and therefore a minimax rate.

\begin{thm}
  \label{thm:lb}
  Consider the following data.
  \begin{enumerate}
    \item positive real number $Q>0$
    \item positive real $\alpha>0$
    \item symmetric space $\X$ of non-compact type
    \item density $f$ on $\X$ such that $\|\Delta^{\alpha/2}f\|_2\leq Q$.  
  \end{enumerate}
  Then there exists a constant $K>0$ such that $\inf_{\hat{f}^n}\mathbb{E}_f[(\hat{f}^n-f)^2]\geq Kn^{-\nicefrac{2\alpha}{2\alpha+\dim\,\X}}$, where the infimum is taken over all estimators $\hat{f}^n$ of $f$ based on $n$ samples drawn independently from $f$.
\end{thm}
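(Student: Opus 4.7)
The plan is to follow the classical packing-plus-Fano argument for minimax density estimation lower bounds (as in Tsybakov, Chapter 2), adapting each ingredient from Euclidean space to the non-compact symmetric space $\X$. The first step is to manufacture a single smooth bump $\psi_h$ concentrated near the identity coset $o\in\X$, with $h>0$ playing the role of the Euclidean bandwidth. I would prescribe its Helgason-Fourier transform to be a fixed smooth compactly supported function with vanishing mean, then dilate by $h$ in the spectral parameter; this is precisely the ``reparametrization of Helgason-Fourier transforms'' mentioned in the abstract. Via Plancherel one then obtains scaling identities for $\|\psi_h\|_1$, $\|\psi_h\|_2$, $\|\psi_h\|_\infty$, and $\|\Delta^{\alpha/2}\psi_h\|_2$ mirroring the Euclidean dilation identities, with the dimension $d=\dim\X$ playing its usual role.

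Second, I would build a family of hypotheses by transporting $\psi_h$ around $\X$ via the $\G$-action. Choose a smooth reference density $f_0$ whose support contains a fixed compact ball $C$ on which $f_0$ is bounded below, and select $M\asymp h^{-\dim\X}$ points $x_1,\dots,x_M\in C$ with pairwise $\X$-distances at least a constant multiple of $h$. Such a maximally packed net exists inside any compact set of the right dimension, and the fact that the exponential map restricts to a diffeomorphism converts this packing problem on $C$ into the standard one on a compact subset of a Euclidean space. Letting $g_k\in\G$ be an isometry sending $o$ to $x_k$, set $f_\omega=f_0+c_0 h^\alpha\sum_k \omega_k\,\psi_h\circ g_k^{-1}$ for $\omega\in\{0,1\}^M$. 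Because $\Delta$ commutes with the $\G$-action, $\|\Delta^{\alpha/2}(\psi_h\circ g_k^{-1})\|_2=\|\Delta^{\alpha/2}\psi_h\|_2$, so after tuning $c_0$ each $f_\omega$ lies in the Sobolev ball $\{\|\Delta^{\alpha/2}f\|_2\leq Q\}$ and remains a probability density.

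Third, set $h\asymp n^{-1/(2\alpha+\dim\X)}$, invoke the Varshamov-Gilbert lemma to extract $N\geq 2^{M/8}$ hypotheses with pairwise Hamming distance $\geq M/8$, and translate this into the pairwise $L^2$ separation $\|f_{\omega}-f_{\omega'}\|_2\gtrsim n^{-\alpha/(2\alpha+\dim\X)}$. The pairwise $\chi^2$-divergence is bounded by $\|f_\omega-f_{\omega'}\|_2^2/\inf_C f_0\asymp h^{2\alpha}$, so $n\cdot\max\mathrm{KL}\asymp nh^{2\alpha}\asymp h^{-\dim\X}\asymp\log N$, and Fano's inequality produces the advertised lower bound.

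The principal obstacle I expect is verifying the scaling identities for $\|\Delta^{\alpha/2}\psi_h\|_2$ in the symmetric space setting. Unlike in Euclidean space, the Plancherel measure involves the $c$-function density and $-\Delta$ acts as multiplication by $|\lambda|^2+|\rho|^2$ rather than $|\lambda|^2$, so a pure rescaling of the spectral parameter does not immediately produce clean power-law scaling of Sobolev norms. Recovering $\|\Delta^{\alpha/2}\psi_h\|_2^2\asymp h^{\dim\X-2\alpha}$ uniformly in small $h$ presumably requires exploiting both the large-spectral-parameter asymptotics of the Plancherel density and the spectral gap of $-\Delta$ on $\X$; this is likely where the bulk of the technical work, and the abstraction alluded to in the abstract, is concentrated.
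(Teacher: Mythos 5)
Your architecture shares the paper's core constructions --- a bump whose bandwidth is set by rescaling the spectral variable of the Helgason--Fourier transform, transport of the bump by isometries in $\G$, a net of centers obtained by pushing a Euclidean grid through the exponential map, and $h\asymp n^{-1/(2\alpha+\dim\X)}$ --- but your information-theoretic engine is genuinely different: the paper runs Assouad's lemma, lower-bounding the affinity $\|P^n_\theta\wedge P^n_{\theta'}\|$ through the Hellinger distance, which it controls by the $L_2$-distance of the perturbations; you run Varshamov--Gilbert plus Fano with a $\chi^2$/KL bound. The paper states that it chose Assouad precisely to avoid controlling $\chi^2$-type divergences on a general symmetric space (this being tractable mainly for $\R^d$ and $\HH$ under explicit coordinates), whereas Fano buys you the familiar Tsybakov bookkeeping at the cost of that divergence bound. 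In outline both routes give the same rate, so the difference is one of tooling rather than of substance.

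However, the step where your route diverges is also where it currently has a genuine gap. The bound $\chi^2\lesssim\|f_\omega-f_{\omega'}\|_2^2/\inf_C f_0$ presumes the perturbations are supported inside the compact set $C$ on which $f_0$ is bounded below; but a bump prescribed by a compactly supported Helgason--Fourier transform is band-limited and therefore \emph{not} compactly supported on $\X$, so $f_\omega-f_{\omega'}$ lives on all of $\X$, where $\inf f_0=0$ since $\X$ is non-compact. The same lack of spatial localization threatens the non-negativity of $f_\omega$ outside $C$ and the near-orthogonality of the $M$ translated bumps that your separation claim $\|f_\omega-f_{\omega'}\|_2^2\asymp h^{2\alpha}\|\psi_h\|_2^2\,H(\omega,\omega')$ quietly uses. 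You must either take a spatially compactly supported bump and do the Sobolev-norm scaling against the Plancherel density directly, or keep the band-limited bump and quantify its decay against the tails of $f_0$, including the cross terms; neither is automatic, and this --- not only the large-$\lambda$ asymptotics of the $c$-function density you flag --- is where the technical weight sits. Finally, a normalization slip: with $\mathcal{H}[\psi_h](\lambda,b)=\mathcal{H}[\psi](h\lambda,b)$ (the analogue of $h^{-d}\psi(\cdot/h)$) one gets $\|\Delta^{\alpha/2}\psi_h\|_2^2\asymp h^{-d-2\alpha}$, not the $h^{d-2\alpha}$ you quote; you need the amplitude-preserving dilation (an extra factor $h^{d}$ on the transform) for your claimed scalings, for positivity of the $h^\alpha$-amplitude perturbations, and for the balance $nh^{2\alpha}\asymp\log N$ to come out as stated.
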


The strategy, similar to a strategy used to derive lower bounds in the Euclidean case, is to adopt a standard and general scheme: lower bound the rate in terms of the mesh length with respect to some finite mesh in a Sobolev ball of densities of order $\alpha$. 
This strategy correctly gives a well-known lower bound for Euclidean space $\mathbb{R}^n$, although it is difficult to find a proof beyond the case $n=1$ in the literature.  
One hurdle in the non-Euclidean setting is that our constructions, such as constructions of kernels with prescribed bandwidths, need to be constructed at a sufficient level of generality.  
This means concretely that formulas involving subtraction and quotients need to be generalized to actions of symmetric groups and operations on \textit{Helgason-Fourier transforms}.  
Another hurdle in the non-Euclidean setting is the need to establish the existence of a suitable mesh in the symmetric space with the desired mesh length.  
This means concretely that we translate Euclidean meshes into meshes on a manifold along the \textit{exponential map} and exploit properties of such a map in the special case where the manifold is a symmetric space of non-compact type.  
The immediate corollary, based on our earlier work on upper bounds, is that the KDE $\hat{f}_{X_1,\ldots,X_n;h,T}$ on symmetric spaces of non-compact type based on a bandwidth $h$ and a certain cutoff parameter $T$ achieves the minimax rate.  

\begin{cor}
  Consider the following data.
  \begin{enumerate}
    \item positive real $\alpha>0$
    \item positive real number $Q>0$
    \item symmetric space $\X$ of non-compact type
  \end{enumerate}
  Then $\mathbb{E}_f\|\hat{f}_{X_1,\ldots,X_n;h_n,T_n}-f\|_2\asymp n^{-\nicefrac{2\alpha}{2\alpha+\dim\,\X}}$ achieves the minimax rate of convergence for an estimator of an $L_2$-density $f$ on $\X$ satisfying $\|\Delta^{\alpha/2}f\|_2\leq Q$ for some suitable choice of $h_n\rightarrow 0$ and $T_n\rightarrow\infty$.  
\end{cor}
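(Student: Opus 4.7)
The plan is to derive the corollary as a direct combination of Theorem~\ref{thm:lb} with the matching upper bound from \cite{Asta-gKDE2}, so the proof is essentially a bookkeeping consolidation. First, I would quote the upper bound established in \cite{Asta-gKDE2}: for sequences $h_n\to 0$ and $T_n\to\infty$ chosen to balance the squared-bias and variance contributions in the bias--variance decomposition of the KDE's $L_2$-risk against the Sobolev constraint $\|\Delta^{\alpha/2}f\|_2\leq Q$, one has
\[
\mathbb{E}_f\|\hat{f}_{X_1,\ldots,X_n;h_n,T_n}-f\|_2^2 \;\leq\; C\, n^{-\nicefrac{2\alpha}{2\alpha+\dim\,\X}}
\]
uniformly in $f$ with $\|\Delta^{\alpha/2}f\|_2\leq Q$, where $C$ depends only on $\alpha$, $Q$, and $\X$. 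This supplies the ``$\lesssim$'' direction of the stated $\asymp$.

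Second, I would invoke Theorem~\ref{thm:lb} for the matching ``$\gtrsim$'' direction. Since $\hat{f}_{X_1,\ldots,X_n;h_n,T_n}$ is one particular estimator built from $n$ independent samples, its risk at a witness density $f$ in the Sobolev ball must be at least the infimum over all such estimators, and Theorem~\ref{thm:lb} bounds that infimum below by $K\,n^{-\nicefrac{2\alpha}{2\alpha+\dim\,\X}}$. Taking supremum over the ball of densities $f$ with $\|\Delta^{\alpha/2}f\|_2\leq Q$ on both sides then gives that the KDE's worst-case $L_2$-risk over the ball is of exact order $n^{-\nicefrac{2\alpha}{2\alpha+\dim\,\X}}$, which is the assertion that the KDE attains the minimax rate.

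There is no substantive obstacle here: the two matching halves are furnished respectively by \cite{Asta-gKDE2} and the present Theorem~\ref{thm:lb}, and the exponents align automatically because both arise from optimizing the same bias--variance trade-off determined by the Sobolev order $\alpha$ and the manifold dimension $\dim\,\X$. The only care needed is notational: the corollary's $\mathbb{E}_f\|\hat{f}_{X_1,\ldots,X_n;h_n,T_n}-f\|_2$ should be read as the squared $L_2$-risk $\mathbb{E}_f\|\hat{f}_{X_1,\ldots,X_n;h_n,T_n}-f\|_2^2$---equivalently the integrated pointwise MSE $\mathbb{E}_f[(\hat{f}^n-f)^2]$ appearing in Theorem~\ref{thm:lb}---so that the two sides of the $\asymp$ refer to the same quantity, and one must verify that the choice of $(h_n,T_n)$ realizing the upper bound in \cite{Asta-gKDE2} is indeed the one producing the exponent furnished by Theorem~\ref{thm:lb}.
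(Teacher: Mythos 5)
Your proposal is correct and matches the paper's intent exactly: the corollary is presented as an immediate consequence of combining the upper bound from \cite{Asta-gKDE2} with the lower bound of Theorem~\ref{thm:lb}, which is precisely your argument. Your remark about reconciling the squared versus unsquared $L_2$-risk notation is a sensible clarification but does not change the substance.
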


We do not go into details in this note about the kernel density estimator $\hat{f}_{X_1,\ldots,X_n;h,T}$, instead referring the reader to our earlier work \cite{Asta-gKDE2} for definitions, examples, and simulations.  

\section{Background}
\subsection{Assouad's Lemma}
Fano's Lemma is a well-worn and general tool for deducing lower bounds on convergence rates for estimators.  
In fact, this method is used to establish the best possible lower bound for deconvolution kernel density estimators on Euclidean space as well as the negatively curved hyperboloid $\HH$ \citep{Huckemann-et-al}.  
However, this method relies on bounding the $\chi^2$-distance between sampling densities.  
Bounding such $\chi^2$-distances in practice is easiest when working under well-understood coordinate systems on $\R^n$ and $\HH$.  
It is difficult to get control over such $\chi^2$-distances in the general case.  
Instead, we use Assouad's Lemma, a variant of Fano's Lemma, defined in terms of \textit{Hamming distances}
$$H(u,v)=\sum_{i=1}^r|u_i-v_i|$$
between bit-vectors $u,v\in\{0,1\}^r$ and a well-defined notion of variation 
$$\|P\wedge Q\|=\int\min(p,q)\,\mu.$$
for probability measures $P,Q$ on a set $M$ having Radon-Nikodym derivatives $p$ and $q$ with respect to a measure $\mu$ on $M$.

\begin{lem}[Assouad's Lemma, {\cite[Theorem 24.3]{van-der-Vaart-asymptotic-stats}}]
  Consider the following data.
  \begin{enumerate}
    \item integer $r>0$
    \item a family $\{P_{\theta}\}_{\theta\in\{0,1\}^r}$ of probability measures on a common set indexed by elements in $\{0,1\}^r$
    \item a statistic $\psi$ on probability measures in the above family, as a function of $\theta\in\{0,1\}^r$, 
    \item an estimator $T$ of the statistic based on an observation from a model in the family $\{P_\theta\}_{\theta\in\{0,1\}^r}$
  \end{enumerate}
  Then $\max_{\theta}2^pE_\theta[d^p(T,\psi(\theta))]\geq\left(\min_{H(\theta,\theta')\geq 1}\frac{d^p(\psi(\theta),\psi(\theta'))}{H(\theta,\theta')}\frac{r}{2}\right)\left(\min_{H(\theta,\theta')=1}\|P_\theta\wedge P_{\theta'}\|\right)$.
\end{lem}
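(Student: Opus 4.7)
The plan is to reduce to Le Cam's two-point inequality, applied coordinate-by-coordinate along the Hamming cube $\{0,1\}^r$. Abbreviate $\rho := \min_{H(\theta,\theta') \geq 1} d^p(\psi(\theta), \psi(\theta'))/H(\theta,\theta')$ and $m := \min_{H(\theta,\theta') = 1} \|P_\theta \wedge P_{\theta'}\|$, so the claim becomes $\max_\theta 2^p E_\theta[d^p(T, \psi(\theta))] \geq \rho\, r\, m / 2$. The first move is to promote the given estimator $T$ to a discrete estimator $\hat\theta \in \{0,1\}^r$ by measurably choosing a minimizer of the finite map $\theta \mapsto d(T, \psi(\theta))$ (lexicographic tie-breaking suffices, since the domain is finite).

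By the triangle inequality and the minimality of $\hat\theta$,
$$d(\psi(\hat\theta), \psi(\theta)) \leq d(\psi(\hat\theta), T) + d(T, \psi(\theta)) \leq 2\, d(T, \psi(\theta)).$$
Raising to the $p$-th power and invoking the defining inequality $d^p(\psi(\hat\theta), \psi(\theta)) \geq \rho \cdot H(\hat\theta, \theta)$ gives the pointwise bound $2^p d^p(T, \psi(\theta)) \geq \rho\, H(\hat\theta, \theta)$. Taking expectations, replacing $\max_\theta$ by the uniform average over $\{0,1\}^r$, and expanding $H(\hat\theta, \theta) = \sum_{i=1}^r \mathbf{1}[\hat\theta_i \neq \theta_i]$ yields
$$\max_\theta 2^p E_\theta[d^p(T, \psi(\theta))] \geq \rho \cdot 2^{-r} \sum_{i=1}^r \sum_\theta P_\theta(\hat\theta_i \neq \theta_i).$$

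For each coordinate $i$, the inner sum splits into $2^{r-1}$ edge contributions, one per pair $\{\theta, \theta^{(i)}\}$ differing only in the $i$-th bit. On each such pair, taking $\theta_i = 0$ without loss of generality, the contribution equals $P_\theta(\hat\theta_i = 1) + P_{\theta^{(i)}}(\hat\theta_i = 0)$, and Le Cam's affinity identity $\|P \wedge Q\| = \inf_B [P(B) + Q(B^c)]$ bounds this below by $\|P_\theta \wedge P_{\theta^{(i)}}\| \geq m$. Summing over the $2^{r-1}$ edges per coordinate and then over the $r$ coordinates produces $r \cdot 2^{r-1} m$, which when substituted yields the claimed $\rho\, r\, m / 2$. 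The only steps requiring any care are the measurable construction of $\hat\theta$ (immediate over a finite label set) and the passage from a metric triangle inequality to a $d^p$ bound via a clean factor of $2^p$; both are routine, and the rest of the argument is essentially a counting exercise on the Hamming cube combined with the well-known variational characterization of the affinity $\|P \wedge Q\|$.
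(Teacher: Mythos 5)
Your proof is correct and follows essentially the same route as the source the paper cites for this lemma (van der Vaart, Theorem 24.3), which the paper quotes without reproving: discretize $T$ to $\hat\theta$, pass from the max to the uniform average over the cube, and bound each coordinate's misclassification over bit-flip pairs by the affinity via the variational identity $\|P\wedge Q\|=\inf_B\left(P(B)+Q(B^c)\right)$. No gaps.
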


The application to density estimation comes from taking all the probability measures in the lemma to describe the product probability measures based on $n$ independent samples.  
Moreover, the lemma is useful for us insofar as we can relate the variation term $\|P_\theta\wedge P_{\theta'}\|$ with integrals that look more like $L_2$-distances.  
For that reason, the following observation made for the case $\X=\R$ also applies to case of general $\X$.  
Write $H^2(P,Q)$ for the square of the \textit{Hellinger distance} between probability measures $P$ and $Q$ on a set $M$, the well-defined integral
$$H^2(P,Q)=\int_M(\sqrt{p}-\sqrt{q})\,\mu$$
for some choice of measure on $M$ with respect to which $P$ and $Q$ have Radon-Nikodym derivatives $p$ and $q$.  
The following lemma is standard and not original to this paper.

\begin{lem}
  Consider the following data.
  \begin{enumerate}
    \item measure space $M$ with measure $\mu$
    \item absolutely continuous probability measures $P$ and $Q$ on $M$
  \end{enumerate}
  Then $\|P^n\wedge Q^n\|\geq\frac{1}{2}\left(1-\frac{1}{2}H^2(P,Q)\right)^{2n}$.  
\end{lem}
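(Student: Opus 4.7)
The plan is to route through the Hellinger affinity $\rho(P,Q) = \int \sqrt{pq}\,d\mu$, which satisfies the identity $\rho(P,Q) = 1 - \tfrac{1}{2}H^2(P,Q)$ after expanding $(\sqrt{p}-\sqrt{q})^2 = p + q - 2\sqrt{pq}$ and using $\int p\,d\mu = \int q\,d\mu = 1$. The statement then reduces to two clean ingredients: a pointwise/Cauchy--Schwarz comparison between $\|P\wedge Q\|$ and $\rho(P,Q)$, and the tensorization of the affinity under product measures.

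First I would establish the single-sample inequality $\|P\wedge Q\| \geq \tfrac{1}{2}\rho(P,Q)^2$. The trick is to split $\sqrt{pq} = \sqrt{\min(p,q)}\cdot\sqrt{\max(p,q)}$ and apply Cauchy--Schwarz:
\[
\rho(P,Q)^2 \;=\; \Bigl(\int \sqrt{\min(p,q)}\sqrt{\max(p,q)}\,d\mu\Bigr)^2 \;\leq\; \Bigl(\int \min(p,q)\,d\mu\Bigr)\Bigl(\int \max(p,q)\,d\mu\Bigr).
\]
The first factor is $\|P\wedge Q\|$, and the second factor equals $2 - \|P\wedge Q\| \leq 2$ because $\min + \max = p + q$ integrates to $2$. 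This yields the desired inequality.

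Next I would tensorize the affinity: by Tonelli on the product measure $\mu^{\otimes n}$, the densities factor, so
\[
\rho(P^n,Q^n) \;=\; \int \sqrt{\textstyle\prod_i p(x_i)\prod_i q(x_i)}\,d\mu^{\otimes n} \;=\; \prod_{i=1}^n \int \sqrt{p\,q}\,d\mu \;=\; \rho(P,Q)^n.
\]
Applying the single-sample inequality to the product measures then gives
\[
\|P^n\wedge Q^n\| \;\geq\; \tfrac{1}{2}\rho(P^n,Q^n)^2 \;=\; \tfrac{1}{2}\rho(P,Q)^{2n} \;=\; \tfrac{1}{2}\bigl(1 - \tfrac{1}{2}H^2(P,Q)\bigr)^{2n},
\]
which is the claim.

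There is no serious obstacle here; the only mild subtlety is confirming that the splitting $\sqrt{pq}=\sqrt{\min}\sqrt{\max}$ and the Cauchy--Schwarz bound are valid when $p,q$ can vanish on sets of positive measure, which is handled by working on the set $\{p>0\}\cup\{q>0\}$ and noting the integrand vanishes off this set. Since the lemma is invoked only to relate Assouad's variation term to a Hellinger (hence $L_2$-style) quantity, the form $\tfrac12(1-\tfrac12 H^2)^{2n}$ is exactly what is needed downstream for the main theorem.
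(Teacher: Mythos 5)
Your proof is correct, and it is precisely the classical argument (affinity--Hellinger identity, the Cauchy--Schwarz bound $\|P\wedge Q\|\geq\tfrac{1}{2}\rho(P,Q)^2$, and tensorization $\rho(P^n,Q^n)=\rho(P,Q)^n$) that the paper implicitly relies on: the paper states the lemma as standard and gives no proof of its own, deferring to the literature (e.g.\ van der Vaart, Lemma 24.3 and its surrounding discussion). One incidental remark: the paper's displayed definition of $H^2(P,Q)$ omits the square on $(\sqrt{p}-\sqrt{q})$, evidently a typo, and your use of the squared integrand is the intended (and standard) reading, consistent with how $H^2$ is used later in the paper.
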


\subsection{Symmetric Spaces}
A symmetric space is a special type of smooth (Riemannian) manifold that looks the same at every vantage point.
In fact, a Riemannian manifold having no 1-dimensional holes and forming a complete metric space is a symmetric space exactly when its curvature is constant.  
We refer the reader to the previous paper \cite{Asta-gKDE2} for the relevant theory. 
\textit{Lie groups}, manifolds with compatible smooth invertible and associative multiplications and smooth inversion operations, are examples of symmetric spaces; examples to keep in mind are smooth manifolds of invertible matrices closed under matrix multiplication and matrix inversion.    
A general symmetric space need not form a Lie group, but symmetric spaces are always constructed from Lie groups of their symmetries.
An example to keep in mind is the \technical{Poincar\'{e} halfplane}.

\begin{eg}
  \label{eg:H2}
  The \technical{Poincar\'{e} halfplane} $\HH$ is the $2$-manifold defined as the subspace
  $$\HH=\{z\in\mathbb{C}\;|\;\mathrm{Im}(z)>0\},$$
  of $\C$ equipped with the Riemannian metric given by the arc length
  $$ds^2=(\mathrm{Im}\;z)^{-2}(d(\mathrm{Re}\;z)^2+d(\mathrm{Im}\;z)^2).$$
  This space can be interpreted as the information manifold of all univariate normal distributions, where the real coordinates describe the means, and the imaginary coordinates describe standard deviations, and the Riemannian metric is the Fisher metric.  
  Alternatively, this space is a natural latent space for families of random graphs used to model real-world networks \cite{Krioukov-et-al-hyperbolic-geometry}. 
  Alternatively, this space models electrical impedances on which certain circuit elements act as M\"{o}bius transformations \citep{Huckemann-et-al}.
\end{eg}

Every symmetric space can be thus expressed as a certain quotient of a Lie group by a Lie subgroup.  
Write $\G$ for a \textit{semisimple Lie group with finite center} and $\mathbf{H}$ for its maximal compact Lie subgroup..  
Then $\X=\G/\mathbf{H}$ is a \textit{symmetric space of noncompact type} and in fact all such symmetric spaces arise in this manner.  
The Lie group $\G$ is a group of isometries on $\X$: each element $g\in\G$ determines an isometry $\X\cong \X$ defined by multiplication on $\G$.  

\begin{eg}
  \label{eg:mobius}
  The space $\SL_2$ of $(2\times 2)$-matrices with determinant $1$ acts on $\HH$ by \technical{M\"{o}bius transformations}:
  $$\left( \begin{array}{cc} a & b \\ c & d \end{array} \right)(z)=\frac{az+b}{cz+d}.$$
  The matrices in $\SL_2$ fixing $i\in\HH$ form the matrix subgroup $\SO_2$ of $(2\times 2))$ rotation matrices.
  The action of $\SL_2$ on $\HH$ implicitly gives a well-defined bijection $\SL_2/\SO_2\cong\HH$ sending an equivalence class of a matrix $m\in\SL_2$ to $m(i)\in\HH$.
  This bijection $\HH\cong\SL_2/\SO_2$ defines an isometry for a suitable choice of bi-$\SO_2$-invariant inner product on the Lie algebra $\mathfrak{sl}_2$ associated to $\SL_2$ \cite[\S3.1]{Terras-harmonic-1}.
  Thus $\HH$ is a Riemmanian symmetric space.  
\end{eg}

We write $\rho_{\X}$ for half of the sum of the \textit{positive root weights} of $\G$.  

\begin{eg}
  \label{eg:root.weight}
  For $\X=\HH$, $\rho_{\X}=\rho_{\HH}=\nicefrac{1}{2}$. 
\end{eg}

We will write ${\bf 1}$ for the point in the quotient $\X=\G/{\bf H}$ represented by the identity element in $\G$.
The \textit{exponential map} is the smooth map $\mathrm{exp}:\mathbb{R}^{\dim\,X}\rightarrow\X$ sending each point $v\in\mathbb{R}^n$ to the point $\gamma_v(1)\in\X$, where $\gamma$ is the unique geodesic $[0,1]\rightarrow M$ such that $\gamma(0)={\bf 1}$ and whose derivative $\gamma'(0)$ at $0$ is $v$.  
We will use the fact that because $\X$ is symmetric of non-compact type, the exponential map defines a diffeomorphism
$$\mathrm{exp}:\mathbb{R}^{\dim\,\X}\cong\X.$$
that increases distances in the sense that $\|u-v\|_2$ is a lower bound for the Riemannian distance between $\mathrm{exp}(u)$ and $\mathrm{exp}(v)$ for all pairs $u,v\in\mathbb{R}^{\dim\,\X}$.  

\subsection{Helgason-Fourier Analysis}
Helgason-Fourier Analysis is an analogue of Fourier Analysis for symmetric spaces of non-compact type.  
The reader is referred to \citep[Section 2]{Pesenson} for a concise summary of the theory and \citep{Terras-harmonic-1} for details in the special case $\X=\HH$.  
Write $\mathcal{H}[f]$ for the \textit{Helgason-Fourier transform} of an $L_2$-function $f:\X\rightarrow\mathbb{C}$.  
This transform $\mathcal{H}$ defines a linear isometry from $L_2(\X)$ to an $L_2$-space of complex-valued functions on a certain \textit{frequency space} that depends on $\X$.  
Unlike with the ordinary Fourier transform for $\mathbb{R}^n$, this frequency space is not also $\X$ but instead a product of Euclidean space with another space.
The exact definition of this frequency space depends on an algebraic decomposition of $\G$ and requires too many preliminaries, and so we refer the reader to \citep[Section 2]{Pesenson} or the previous paper \cite{Asta-gKDE2} for its definition.  
Adopting the same notation as in our previous paper \cite{Asta-gKDE2}, we write $\mathfrak{a}\times\B$ for this frequency product space, where ${\bf \mathfrak{a}}$ is a finite dimensional real vector space and $\B$ is a certain quotient of $\mathbb{H}$.  
We have a Plancherel Theorem, the observation that the linear map $\mathcal{H}$ is in fact a linear isometry of $L_2$-spaces:
$$\mathcal{H}:L_2(\X)\cong L_2(\mathfrak{a}\times\B).$$
One thing to note is that the measure on $\mathfrak{a}\times\B$ giving us our Plancherel Theorem, is not at all straightforward to define, and involves a \textit{Harish-Chandra c-function} central to a great deal of Geometric Analysis.
We will omit notation for the measure on $\mathfrak{a}\times\B$ and the volume measure on $\X$ when integrating functions over them.  

\begin{eg}
  \label{eg:transform}
  Take $\X=\HH$. 
  The factors $\mathfrak{a}$ and $\mathbf{B}$ of frequency space are given by
  $$\mathfrak{a}=\mathbb{R}\quad\mathbf{B}=\SO_2/\mathbb{Z}_2,$$
  where $\mathbb{Z}_2$ is the discrete subset of the identity rotation and the rotation by $180$ degrees.  
  Then
  $$\mathcal{H}[f](\lambda,[k])=\int_{\HH}f(z)\,\mathrm{Im}(k(z))^{\frac{1}{2}-\lambda}\,dz,$$
  where $[k]$ denotes the equivalence class in $\B$ represented by $k\in\mathbf{H}$.  
\end{eg}

The Helgason-Fourier transform sends convolutions to products in a suitable sense.  
The previous paper from the author analyzing a KDE \cite{Asta-gKDE2} or other work deconvolving noise \cite{chevallier2017kernel,Huckemann-et-al} uses this property to obtain convergence rate bounds.  
Since the constructions in this paper do not require convolutions, we do not go into the details and refer the reader to \citep[Section 2]{Pesenson} for details.  
One property that is crucial for us is the ability to use Helgason-Fourier transforms, like ordinary Fourier transforms, to define $\alpha$-order derivatives even when $\alpha$ is not a positive integer.  
Recall that the ordinary Fourier transform $\mathcal{F}$ satisfies the property
\begin{equation}
  \label{eqn:ordinary.derivatives}
  \mathcal{F}[f^{(\alpha)}](s)=i\alpha\mathcal{F}[f](\alpha s)
\end{equation}
for all positive integers $\alpha$ and functions $f\in L_2(\mathbb{R})$ having all orders of derivatives up to $\alpha$.  
We can then extend the definition of $f^{(\alpha)}$ for all positive reals $\alpha$ by taking (\ref{eqn:ordinary.derivatives}) as a definition.
Since the Laplace-Beltrami operator $\Delta$ on a Riemannian manifold is a second-order differential operator, we write $\Delta^{\alpha/2}f$ for our analogue of $\alpha$-order derivatives.  
For each $f\in L_2(\X)$, define $\Delta^{\alpha/2}f\in L_2(\X)$ by
$$\mathcal{H}[\Delta^{\alpha/2}f](\lambda,b)=(\rho_{\X}^2-\lambda^2)^{\alpha/2}\mathcal{H}[f](\lambda,b),$$

The \textit{order $\alpha$ Sobolev ball $\mathcal{F}_\alpha(Q)$  of radius $Q$} in $L_2(\X)$ can then be defined as
$$\mathcal{F}_\alpha(Q)=\{f\in L_2(\X)\;|\;\|\Delta^{\alpha/2}f\|_2\leqslant Q\}.$$

For each smooth $L_2$-function $K:\X\rightarrow\mathbb{R}$ and $h>0$, write $K_h$ for the smooth $L_2$-function $\X\rightarrow\mathbb{R}$ that is characterized in terms of its Helgason-Fourier transform as follows:
$$\mathcal{H}[K_h](\lambda,b)=\mathcal{H}[K](h\lambda,b).$$

\section{Main Proof}
Let $d$ be the dimension $\dim\,\X$ of the symmetric space $\X$.    
Define
$$h_n=n^{-\nicefrac{1}{2\alpha+d}}\quad r_n=\lfloor n^{\nicefrac{d}{2\alpha+d}}\rfloor.$$

Fix a smooth $L_2$-density $f$ on $\X$.
Fix a smooth real-valued bump function $K$ on $\X$ with support given in the open disk in $\X$ about the point ${\bf 1}$ of radius $1$.  
Let $\theta,\theta'$ denote elements in $\{0,1\}^{r_n}$.  
There exists a mesh $X_{n,1},\ldots,X_{n,r_n}$ in $\R^d$ whose mesh width is $h_n$, by evenly subdividing a $d$-dimensional cube of edge length $1$ in $\R^d$ into smaller cubes each of which has edge length $h_n$. 
It therefore follows that $\mathrm{exp}(X_{n,1}),\ldots,\mathrm{exp}(X_{n,r_n^d})$ gives a mesh in $\X$ whose mesh width is at least $h_n$.  
Let $g_{x}$ denote a choice of isometry $\X\cong\X$ in $\G$ sending $x$ to ${\bf 1}$ and continuous in $x$.
For each $n=1,2,\ldots$ and $\theta$, define:

$$f_{n,\theta}=f+h_n^{\alpha}\sum_{j=1}^{r_n}\theta_j K_{h_n^d}g_{X_{nj}}.$$

\begin{lem}
  \label{lem:smoothness}
  Fix $n$ and $Q>0$.
  For both $\|\Delta^{\alpha/2}f\|_2$ and $\|\Delta^{\alpha/2}K\|_2$ sufficiently small, 
  $$f_{n,\theta}\in\mathcal{F}_\alpha(Q).$$
\end{lem}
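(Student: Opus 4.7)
The goal is to control $\|\Delta^{\alpha/2}f_{n,\theta}\|_2$. The natural first move is the triangle inequality,
$$\|\Delta^{\alpha/2}f_{n,\theta}\|_2 \leq \|\Delta^{\alpha/2}f\|_2 + h_n^{\alpha}\left\|\Delta^{\alpha/2}\sum_{j=1}^{r_n}\theta_j\, K_{h_n^d}g_{X_{nj}}\right\|_2,$$
so the assumption on $\|\Delta^{\alpha/2}f\|_2$ disposes of the first term and all the work is in showing the perturbation has $H^{\alpha}$-norm bounded by a constant multiple of $\|\Delta^{\alpha/2}K\|_2$, uniformly in $n$ and in $\theta\in\{0,1\}^{r_n}$.

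\textbf{Key steps.} First I would exploit that the Laplace--Beltrami operator $\Delta$ is $\G$-invariant: since $g_{X_{nj}}\in\G$ acts by isometries, $\Delta^{\alpha/2}$ commutes with the $g_{X_{nj}}$-action (viewed via Helgason--Fourier as the unitary action of $\G$ on the $\B$-factor), so $\|\Delta^{\alpha/2}(K_{h_n^d}g_{X_{nj}})\|_2 = \|\Delta^{\alpha/2}K_{h_n^d}\|_2$ for every $j$. Second, I would compute $\|\Delta^{\alpha/2}K_{h_n^d}\|_2$ via Plancherel,
$$\|\Delta^{\alpha/2}K_{h_n^d}\|_2^2 \;=\; \int_{\mathfrak{a}\times\B}(\rho_{\X}^2-\lambda^2)^{\alpha}\,\bigl|\mathcal{H}[K](h_n^d\lambda,b)\bigr|^2,$$
and perform a change of variables $\lambda\mapsto \lambda/h_n^d$ on $\mathfrak{a}$, tracking the Jacobian together with the $|c(\lambda)|^{-2}$ factor built into the Plancherel measure to read off the $h_n$-dependence. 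Third, I would show the cross terms between different $j$'s are negligible so that the sum effectively adds in quadrature: the translated bumps $K_{h_n^d}g_{X_{nj}}$ are supported in pairwise disjoint balls of radius $\lesssim h_n^d$ around the $\exp(X_{nj})$, and the exponential map's distance-increasing property ensures these centers are at least $h_n\gg h_n^d$ apart. This gives
$$h_n^{2\alpha}\Bigl\|\Delta^{\alpha/2}\!\sum_j \theta_j K_{h_n^d}g_{X_{nj}}\Bigr\|_2^2 \;\lesssim\; h_n^{2\alpha}\, r_n\, \|\Delta^{\alpha/2}K_{h_n^d}\|_2^2,$$
and the choice $h_n=n^{-1/(2\alpha+d)}$, $r_n\asymp n^{d/(2\alpha+d)}$ is precisely calibrated so that, once the powers of $h_n^d$ produced in Step 2 are folded in, the right-hand side reduces to a universal constant times $\|\Delta^{\alpha/2}K\|_2^2$. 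Taking $\|\Delta^{\alpha/2}f\|_2$ and $\|\Delta^{\alpha/2}K\|_2$ small enough then forces $\|\Delta^{\alpha/2}f_{n,\theta}\|_2\leq Q$.

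\textbf{Main obstacle.} The genuinely hard step is the third one: justifying quadrature summation when $\alpha$ is not an even integer. In that regime $\Delta^{\alpha/2}$ is a nonlocal pseudo-differential operator, so disjointness of spatial supports does not immediately kill the cross terms $\langle\Delta^{\alpha/2}(K_{h_n^d}g_{X_{nj}}),\Delta^{\alpha/2}(K_{h_n^d}g_{X_{nk}})\rangle$. I expect the cleanest route is to stay entirely on the frequency side: expand $\|\sum_j\theta_j\mathcal{H}[K_{h_n^d}g_{X_{nj}}]\|^2$ and use the equivariance of the Helgason--Fourier transform under $\G$ (phases of modulus one together with translation in the $\B$-variable) to relate the off-diagonal integrals to oscillatory averages controlled by the mesh geometry. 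A secondary subtlety is that the Plancherel measure involves the Harish-Chandra $c$-function rather than Lebesgue measure on $\mathfrak{a}$, so the scaling step must be done with care to extract the correct power of $h_n$; both subtleties are precisely the ``suitable level of generality'' the introduction flagged as the main non-Euclidean difficulty.
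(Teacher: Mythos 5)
Your first two steps (triangle inequality isolating $f$, invariance of $\Delta^{\alpha/2}$ under the isometries $g_{X_{nj}}$, and the Plancherel/change-of-variables estimate for $\|\Delta^{\alpha/2}K_{h_n^d}\|_2$) coincide with the paper's argument. The genuine gap is your third step, and you flag it yourself: the claim that the $r_n$ translated bumps add ``in quadrature'' is never established. For non-integer $\alpha$ the operator $\Delta^{\alpha/2}$ is nonlocal, so disjointness of supports does not kill the cross terms, and the proposed frequency-side oscillation argument is a hope rather than a proof; nothing in the Helgason--Fourier toolkit you cite hands you this near-orthogonality for free. Moreover, even granting quadrature, your assertion that the calibration $h_n=n^{-1/(2\alpha+d)}$, $r_n\asymp n^{d/(2\alpha+d)}$ reduces $h_n^{2\alpha}\,r_n\,\|\Delta^{\alpha/2}K_{h_n^d}\|_2^2$ to a universal constant does not survive the bookkeeping with the paper's normalization: the perturbation uses bandwidth $h_n^d$ (not $h_n$) and amplitude $h_n^{\alpha}$, and with the scaling $\|\Delta^{\alpha/2}K_{h_n^d}\|_2^2\lesssim h_n^{d-2\alpha d}\|\Delta^{\alpha/2}K\|_2^2$ used in the paper one gets $h_n^{2\alpha}\,r_n\,h_n^{d-2\alpha d}\asymp n^{2\alpha(d-1)/(2\alpha+d)}$, which stays bounded only when $d=1$.

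The reason the paper's proof is much shorter is that the lemma fixes $n$ and only asks for membership in $\mathcal{F}_\alpha(Q)$ once $\|\Delta^{\alpha/2}f\|_2$ and $\|\Delta^{\alpha/2}K\|_2$ are sufficiently small (with the threshold allowed to depend on $n$). So no uniformity in $n$ is needed: the paper bounds the sum over $j$ crudely by the triangle inequality, $\sum_{\theta_j=1}\|\Delta^{\alpha/2}K_{h_n^d}g_{X_{nj}}\|_2\le r_n\|\Delta^{\alpha/2}K_{h_n^d}\|_2$, accepts a constant that grows with $n$, and absorbs it using the smallness hypothesis at the fixed $n$. If you drop the quadrature step and keep your first two steps, your argument closes the same way. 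If you instead insist on a bound uniform in $n$ --- which is what the application to the main theorem morally calls for --- then the cross-term analysis you sketch would have to be carried out in earnest, and the exponent count above indicates that the normalization of $f_{n,\theta}$ itself would need to be revisited before any such uniform statement could hold.
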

\begin{proof}
Note that we can bound $\|\Delta^{\alpha/2}K_{h_n}\|_2$ by the inequalities
\begin{align*}
	\|\Delta^{\alpha/2}K_{h_n^d}\|_2^2 &= \|\mathcal{H}[\Delta^{\alpha/2}K_{h_n^d}]\|_2^2\\
	&=\int_{\X}(\rho_{\X}^2-\lambda^2)^{\alpha}\mathcal{H}[K](h_n^d\lambda,b)^2\\
	&\leq Ch_n^{d-2\alpha d}\int_{\X}(\rho_{\X}^2-u^2)^{\alpha}\mathcal{H}[K](u,b)^2=Cn^{\nicefrac{d(2\alpha-1)}{2\alpha+d}}\|\Delta^{\alpha/2}K\|^2_2
\end{align*}
for some constant $C>0$.  
Therefore we can deduce that
\begin{align*}
	\|\Delta^{\alpha/2}f_{n,\theta}\|_2 & \leq\|\Delta^{\alpha/2}f\|_2+n^{-\nicefrac{\alpha}{2\alpha+d}}\sum_{\theta_j=1}\|\Delta^{\alpha/2}K_{h_n^d}g_{X_{nj}}\|_2\\
	& \leq\|\Delta^{\alpha/2}f\|_2+n^{-\nicefrac{\alpha}{2\alpha+d}}\sum_{\theta_j=1}\|\Delta^{\alpha/2}K_{h_n^d}\|_2\\
	&\leq \|\Delta^{\alpha/2}f\|_2+n^{\nicefrac{d-\alpha}{2\alpha+d}}\|\Delta^{\alpha/2}K_{h_n^d}\|_2\\
	&\leq \|\Delta^{\alpha/2}f\|_2+Cn^{\nicefrac{d-\alpha+d(2\alpha-1)}{2\alpha+d}}\|\Delta^{\alpha/2}K\|_2
\end{align*}
  For sufficiently small  $\|\Delta^{\alpha/2}f\|_2$ and $\|\Delta^{\alpha/2}K\|_2$ but fixed $n$, this quantity can be made arbitrarily small.
\end{proof}

\begin{lem}
  \label{lem:bounds}
  There exists a constant $C>0$ such that for $\theta,\theta'\in\{0,1\}^{r_n}$, 
  $$\|f_{n,\theta}-f_{n,\theta'}\|_2^2=Cn^{-1}H(\theta,\theta')$$
  and if $H(\theta,\theta')=1$, $\|P_{\theta}\wedge P_{\theta'}\|\geq (1-\mathcal{O}(n^{-1}))^{2n}$.
\end{lem}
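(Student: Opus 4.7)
The plan is to reduce both claims to a single $L_2$-computation of $\|K_{h_n^d}\|_2^2$ together with the variation--Hellinger bound already recorded above, exploiting the isometry property of $g_{X_{nj}}$ and the Plancherel identity on $\X$.

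For the equality in the first claim, I first write
$$f_{n,\theta}-f_{n,\theta'} = h_n^\alpha\sum_{j=1}^{r_n}(\theta_j-\theta'_j)\,K_{h_n^d}\,g_{X_{nj}},$$
and argue that the translated bumps $K_{h_n^d}\,g_{X_{nj}}$ have pairwise disjoint supports. Since $K$ is supported in the unit ball around ${\bf 1}$, and since Fourier rescaling $h\mapsto h_n^d$ together with the distance-increasing property of $\exp:\mathbb{R}^{\dim\X}\cong\X$ compresses the real-space support of $K_{h_n^d}$ at rate $h_n^d$, which is smaller than the mesh spacing $h_n$ for $d>1$ (and can be matched for $d=1$ by pre-shrinking $K$), the translates about distinct mesh points are disjoint for $n$ large. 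Because each $g_{X_{nj}}$ is an isometry and thus preserves the $L_2$ norm, this yields
$$\|f_{n,\theta}-f_{n,\theta'}\|_2^2 = h_n^{2\alpha}\,H(\theta,\theta')\,\|K_{h_n^d}\|_2^2.$$
Specializing the substitution $u=h_n^d\lambda$ used in the proof of Lemma~\ref{lem:smoothness} to $\alpha=0$ gives $\|K_{h_n^d}\|_2^2\asymp h_n^d\,\|K\|_2^2$, and since $h_n^{2\alpha+d}=n^{-1}$ by construction, this delivers the claimed identity $Cn^{-1}H(\theta,\theta')$.

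For the second claim, when $H(\theta,\theta')=1$ the two perturbed densities differ in exactly one coordinate $j_0$, so $f_{n,\theta}-f_{n,\theta'}=\pm h_n^\alpha K_{h_n^d}g_{X_{nj_0}}$. Applying the variation--Hellinger bound $\|P^n\wedge Q^n\|\geq\tfrac12(1-\tfrac12 H^2(P,Q))^{2n}$ already stated above, it suffices to show $H^2(P_\theta,P_{\theta'})=\mathcal{O}(n^{-1})$. I would use the pointwise estimate
$$(\sqrt{p}-\sqrt{q})^2 = \frac{(p-q)^2}{(\sqrt{p}+\sqrt{q})^2}\leq \frac{(p-q)^2}{p},$$
together with a positive lower bound $c>0$ for $f$ on the bounded region containing the bump supports (which we may arrange by choosing $f$ strictly positive there, compatibly with Lemma~\ref{lem:smoothness}), to conclude
$$H^2(P_\theta,P_{\theta'}) \leq c^{-1}\|f_{n,\theta}-f_{n,\theta'}\|_2^2 = \mathcal{O}(n^{-1})$$
by part (1). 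Substituting into the variation--Hellinger bound gives the claim.

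The main technical obstacle is the disjoint-supports step, because $K_{h_n^d}$ is specified only through its Helgason-Fourier transform rather than by any explicit real-space formula on $\X$. Translating Fourier-side compression into spatial compression is precisely where the distance-increasing property of $\exp$ on a non-compact-type symmetric space enters: localization estimates for the inverse transform on $\mathfrak{a}$ transport along $\exp$ to bona fide spatial localization on $\X$. A secondary task is verifying the $\alpha=0$ specialization of the Plancherel estimate, which follows routinely from the same change-of-variables argument used in Lemma~\ref{lem:smoothness}.
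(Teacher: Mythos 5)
Your proposal follows essentially the same route as the paper: expand $f_{n,\theta}-f_{n,\theta'}$ into translated bumps treated as having disjoint supports, use the isometry invariance of $g_{X_{nj}}$ together with the Plancherel/change-of-variables scaling $\|K_{h_n^d}\|_2^2\asymp h_n^d\|K\|_2^2$ so that $h_n^{2\alpha+d}=n^{-1}$ yields the first claim, and then feed the Hellinger estimate into the variation inequality for the second. If anything, your handling of the second claim is slightly more careful than the paper's, since you bound $H^2(P_\theta,P_{\theta'})$ from above using a pointwise lower bound on the density (the direction the variation--Hellinger inequality actually requires), and you at least flag the disjoint-support/localization issue for $K_{h_n^d}$, which the paper's proof passes over silently.
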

\begin{proof}
  Note that
\begin{align*}
	H^2(P_{\theta},P_{\theta'}) 
	&= \int_{\X}(\sqrt{f_{n,\theta}}-\sqrt{f_{n,\theta'}})^2\,dx\\
	&= \int_{\X}\left(\frac{(f_{n,\theta}-f_{n,\theta'})}{\sqrt{f_{n,\theta}}+\sqrt{f_{n,\theta'}}}\right)^2\\
	&\geq C_1\|f_{n,\theta}-f_{n,\theta'}\|_2^2\\
	&= C_1h^{2\alpha}_n\sum_{j=1}^{r_n^d}|\theta_j-\theta_j'|^2\int_{\X}K^2_{h_n^d}(g_{\mathrm{exp}(X_{nj})})\\
	&= C_1h^{2\alpha}_n\sum_{j=1}^{r_n^d}|\theta_j-\theta_j'|^2\int_{\X}K^2_{h_n^d}\\
	&= C_1h^{2\alpha}_n\sum_{j=1}^{r_n^d}|\theta_j-\theta_j'|^2\int_{\mathfrak{a}\times{\mathbf B}}\mathcal{H}[K](h_n^d\lambda,b)\\
	&= C_1h^{2\alpha+d}_nH(\theta,\theta')\|\mathcal{H}[K]\|_2^2=C_2n^{-1}H(\theta,\theta')
\end{align*}
for some constants $C_1,C_2>0$.  
Thus when $H(\theta,\theta')=1$,
$$\|P_{\theta}^n\wedge P_{\theta'}^n\|\geq\left(1-\frac{1}{2}H^2(P_{\theta},P_{\theta'})\right)^{2n}\geq(1-\mathcal{O}(n^{-1}))^{2n}.$$
\end{proof}

\begin{proof}[proof of Theorem \ref{thm:lb}]
  Lemma \ref{lem:smoothness} implies that we can apply Assouad's Lemma to the case
  $$r=r_n,\,\psi(\theta)=f_{n,\theta},\,T=\hat{f}^n,\,P_{\theta}=P_{n,\theta}^n,$$
  where $P_{n,\theta}^n$ denotes the probability distribution on $(X_1,\ldots,X_n)$ for $X_1,\ldots,X_n$ independently drawn from $f_{n,\theta}$, to get that $\max_{\theta}2^pE_\theta\|\hat{f}_n-f_{n,\theta}\|_2^2\,dx\geq C_2h_n^{2\alpha+\dim\,X}n^{-1}\frac{r_n^d}{2}(1-O(n^{-1}))^{2n}$.
  Thus we see that
  \begin{align*}
  \inf_{\hat{f}^n}\mathbb{E}_f[(\hat{f}^n-f)^2] 
  &\geq\max_{\theta}2^pE_\theta\|\hat{f}_n-f_{n,\theta}\|_2^2 \\
  &\geq C_2h_n^{2\alpha+d}\frac{r_n}{2}(1-O(n^{-1}))^{2n}\\
  &\geq \mathcal{O}(n^{-\nicefrac{2\alpha}{2\alpha+d}}).
\end{align*}
where the second line follows from Lemma \ref{lem:bounds}.
\end{proof}

\bibliographystyle{imsart-nameyear}
\bibliography{bib_asta}

%\vskip .65cm
%\noindent
%The Ohio State University
%\vskip 2pt
%\noindent
%E-mail: dasta@stat.osu.edu 
%\vskip 2pt

%% The Appendices part is started with the command \appendix;
%% appendix sections are then done as normal sections
%% \appendix

%% \section{}
%% \label{}

%% If you have bibdatabase file and want bibtex to generate the
%% bibitems, please use
%%
%%  \bibliographystyle{elsarticle-num} 
%%  \bibliography{<your bibdatabase>}

%% else use the following coding to input the bibitems directly in the
%% TeX file.

\end{document}